\newtheorem{thm}{Theorem}[section]
\newtheorem{cor}[thm]{Corollary}
\newtheorem{lem}[thm]{Lemma}
\newtheorem{prop}[thm]{Proposition}
\theoremstyle{definition}
\newtheorem{defi}[thm]{Definition}
\newtheorem*{notat}{Notation}
\newtheorem{example}[thm]{Example}
\newtheorem{remark}[thm]{Remark}
\newtheorem{step}{Step}
\newlist{enumthm}{enumerate}{1}  
\setlist[enumthm,1]{label=\textup{(\roman*)}}
\renewcommand{\geq}{\geqslant}
\renewcommand{\leq}{\leqslant}
\newcommand{\abs}[1]{\lvert #1 \rvert}
\newcommand{\erz}[1]{\langle #1 \rangle}
\newcommand{\nteq}{\unlhd} 
\newcommand{\nt}{\lhd}
\DeclareMathOperator{\Z}{\mathbf{Z}}
\DeclareMathOperator{\C}{\mathbf{C}}
\DeclareMathOperator{\N}{\mathbf{N}}
\DeclareMathOperator{\irr}{Irr}
\DeclareMathOperator{\lin}{Lin}
\DeclareMathOperator{\aut}{Aut}
\DeclareMathOperator{\syl}{Syl}
\begin{document}
\author{Frieder Ladisch}
\address{Universit\"{a}t Rostock,
         Institut f\"{u}r Mathematik,
         18051 Rostock,
         Germany}
\email{frieder.ladisch@uni-rostock.de}
\title{Groups with Anticentral Elements}
\keywords{Finite groups, Centralizers,
          solvable groups, 
          Carter subgroup}
\subjclass[2000]{Primary 20D10, Secondary 20D15, 20C15}
\begin{abstract}
  We study finite groups $G$ with elements $g$ such that
  $\abs{\C_G(g)} = \abs{G:G'}$. (Such elements generalize
  fixed-point-free automorphisms of finite groups.)
  We show that these groups have  a unique conjugacy class of
  nilpotent supplements for the  commutator subgroup and,
  using the classification of finite simple  groups, that these
  groups  are solvable.
\end{abstract}
        
\maketitle

\section{Definition}
Throughout the paper, let $G$ be a finite group.
\begin{prop}
  Let $a\in G$. The following assertions are equivalent:
  \begin{enumthm}
  \item $\abs{\C_G(a)}= \abs{G:G'}$ .
  \item $a^G = aG'$.
  \item $G'= [a,G]$ as set.
  \item Every nonlinear irreducible character vanishes at $a$.
  \end{enumthm}
\end{prop}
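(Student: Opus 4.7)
The plan is to prove the chain $(i) \Leftrightarrow (ii) \Leftrightarrow (iii)$ by direct set-theoretic comparison, and then add the equivalence with $(iv)$ via column orthogonality of characters.

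First, I would record the general inclusion $a^G \subseteq aG'$: for any $g \in G$, we have $a^g = a \cdot [a,g]$ and $[a,g] \in G'$. Since the orbit $a^G$ has size $\abs{G : \C_G(a)}$ while the coset $aG'$ has size $\abs{G'}$, the equality of cardinalities is exactly $\abs{\C_G(a)} = \abs{G:G'}$. Combined with the inclusion, this gives $(i) \Leftrightarrow (ii)$.

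For $(ii) \Leftrightarrow (iii)$, I would rewrite $[a,G] = \{a^{-1}a^g : g \in G\} = a^{-1}\, a^G$. Thus $[a,G] = G'$ as a set if and only if $a^G = aG'$.

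The main content is $(i) \Leftrightarrow (iv)$, which I would deduce from the column orthogonality relation
\[
  \sum_{\chi \in \irr(G)} \abs{\chi(a)}^2 = \abs{\C_G(a)}.
\]
Splitting the sum into linear and nonlinear characters and using that $\abs{\lambda(a)} = 1$ for each of the $\abs{G:G'}$ linear characters $\lambda$, I obtain
\[
  \sum_{\chi \in \irr(G) \setminus \lin(G)} \abs{\chi(a)}^2
  = \abs{\C_G(a)} - \abs{G:G'}.
\]
Since every term on the left is nonnegative, the right-hand side vanishes precisely when each $\chi(a) = 0$ for nonlinear $\chi$, establishing the equivalence.

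I do not expect a serious obstacle here; the only thing to be careful about is recording the general inclusion $a^G \subseteq aG'$ (so that a cardinality comparison suffices) and invoking column orthogonality in the form above rather than the more common two-variable version. Everything else is formal manipulation.
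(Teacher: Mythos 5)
Your proposal is correct and follows essentially the same route as the paper: the inclusion $a^G \subseteq aG'$ plus a cardinality count for the equivalence of the first three conditions, and the second (column) orthogonality relation split over linear versus nonlinear characters for the fourth. You merely spell out the steps in a little more detail than the paper does.
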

\begin{proof}
  The conjugacy class of $a$ is contained in
  $aG'$, its cardinality is $\abs{G:\C_G(a)}$.
  This shows the equivalence of the first three conditions.
  The remaining equivalence follows  from the second orthogonality
  relation:
  \[ \abs{\C_G(a)} = \sum_{\chi \in \irr (G)} \abs{\chi(a)}^2
                \geq \sum_{\lambda \in \lin(G)} \abs{\lambda(a)}^2
                   = \abs{G:G'}\]
  with equality if and only if every nonlinear irreducible
  character vanishes at $a$.
\end{proof}
\begin{defi}
  An element $a \in G$ is called \emph{anticentral (in $G$)} if
  $\abs{\C_G(a)} = \abs{G:G'}$.
\end{defi}
We see that an anticentral element has a centralizer of  minimal
possible size. We make some trivial observations:
If $G$ is nonabelian and $a$ is anticentral,
then $a\notin G'$, since $G'$ contains more
than one conjugacy class.
If $a$ is anticentral in  $G$, then
$aN$ is anticentral in $G/N$ for any $N\nteq G$.
If $G$ and $H$ are two groups, then
$(a,b)$ is anticentral in $G \times H$
if and only if $a$ is anticentral in $G$
and $b$ is anticentral in $H$.

\section{Examples}
We collect some examples of groups with anticentral elements.
The proofs are elementary in most cases.
\begin{example}
   \label{ab}In an abelian group, every element is anticentral.
\end{example}
\begin{example}
  Let $K$ be a group admitting a fixed point free
  automorphism $\alpha$.
  Then $\alpha$ is anticentral in the semidirect  product
  $G= \erz{\alpha}K$.
  More generally, take an abelian group $A$ with a surjective
  homomorphism onto $\erz{\alpha}$ and let $G$ the semidirect
  product $G=AK$.
\end{example}
\begin{example} \label{es}
  In an extraspecial $p$-group, all the elements outside the
  commutator subgroup are anticentral.
\end{example}
Parts of the following example are well
known~\cite[Lemma~12.3]{isa76}:
\begin{example}
  Let $G$ be solvable and suppose that $G'$ is a minimal
  normal subgroup of $G$. Then $G$ contains anticentral
  elements. There are two possible cases:
  \begin{enumthm}
  \item $G'\leq \Z(G)$, $\abs{G'}=p$, $G/\Z(G)$ is an
       elementary abelian $p$-group and every noncentral element
       is anticentral,
  \item $G'\cap \Z(G) = 1$, then $G/\Z(G)$ is a Frobenius
        group with kernel $(G' \times \Z(G) )/\Z(G)$ and cyclic
        complement.
        For  every $g$ not in $\C_G(G')$, we have
        $G= \C_G(g)G'$ and $\C_G(g)\cap G'=1$,
        so all these elements are anticentral.
  \end{enumthm}
  In both cases, all nonlinear characters have the same
  degree.
\end{example}
\begin{proof}
  If $G' \leq \Z(G)$ then by minimality $\abs{G'}=p$, a prime,
  and for every $g\in G$ the map $x \mapsto [x,g]$
  is actually a group homomorphism $G\to G'$ with kernel $\C_G(g)$.
  Via that map, the
  factor group $G/\C_G(g)$ embeds into $G'$ which has prime order.
  So if $g$ is noncentral, then $G/\C_G(g)\cong G'$ and
  $g$ is anticentral.
  For $x\in G$ we see $x^p\in \C_G(g)$ for every $g\in G$.

  Now suppose $G' \cap \Z(G) = 1 $ and let $1 \neq x \in G'$.
  Since $G'$ is elementary
  abelian, we see that $G' \leq \C_G(x) \nt G$,
  and as $G' = \erz{ x^g \mid g \in G}$,
  we get
  \[ \C_G(G') = \bigcap_{g\in G} \C_G(x^g) = \C_G(x). \]
  Thus if $g \notin \C_G(G')$, then
  $\C_G(g) \cap G' = 1$ and $\C_G(g)$ is abelian.
  As $\abs{\C_G(g)}\geq \abs{G:G'}$, we
  have
  $\abs{\C_G(g)G'} = \abs{\C_G(g)}\abs{G'}
       \geq \abs{G:G'}\abs{G'} =\abs{G}$.
  We conclude $G= \C_G(g)G'$.
  Then $\Z(G) = \C_G(g) \cap \C_G(G')$ and every element of
  $\C_G(g) \setminus \C_G(G')$ operates fixed point freely on
  $G'$, where $\C_G(G')= \Z(G)\times G'$.
  Thus $G/\Z(G)$ is  a Frobenius group as claimed.
  The statement on the degrees is easy to prove and well
  known~\cite{isa76}.
\end{proof}
  In his book~\cite[Lemma~12.3]{isa76}, Isaacs investigates
  solvable groups such
  that $G'$ is the unique minimal normal subgroup of $G$
  (the uniqueness is, however, not very essential).
  The result is used in the
  investigation of groups having only two character degrees.
  Instead of saying  ``A group has only two different character
  degrees'' one can say as well
   ``Every nonlinear character takes on the same value $m$ at
  $1$''.
  This suggests a natural generalization by
  replacing `$1$' in the statement by some nonidentity element
  of the  group.
  For $m \neq 0$, such groups were studied by
  Bianchi, Chillag and Pacifici~\cite{bcp06}.
  In this case,
  $G' \setminus \{1\}$ is a conjugacy class of $G$
  (so $G'$ is a minimal normal subgroup and $G$ is solvable) and
  the structure of such a group is very restricted by the
  hypothesis.
  What we do here is to study the case  $m=0$ which is
  more difficult as can be already seen from the examples
  mentioned.
\begin{example} \label{metaab}
  If $A \nt G$ such that $G/A$ is cyclic and $A$ abelian,
  then every element $a$ such that $G/A = \erz{aA}$
  is anticentral:
  Since every character of $A$ is extendible to its
  inertia group, the nonlinear characters are induced from
  proper subgroups containing $A$. Since these subgroups
  are necessarily normal and don't contain $a$,
  nonlinear characters vanish on $a$.
\end{example}
\begin{example}
  Any group of order $p^n$ where $p$ is a prime and $n\leq 4$
  contains anticentral elements. If $n\leq 3$, this follows
  from \ref{ab} and \ref{es}, so let $G$ be a group of order
  $p^4$ and assume  $G$ contains no  anticentral elements.
  Then for any $g\in G$, we have
  $\abs{\C_G(g)} > \abs{G:G'}\geq p^2$.
  Since $G$ has to be nonabelian, there  is an element $g$
  with $\abs{\C_G(g)} = p^3$, so $\C_G(g)\nt G$.
  Then $\C_G(g)$ is nonabelian, since otherwise the elements
  outside $\C_G(g)$ were anticentral by~\ref{metaab}.
  But then  $\abs{\Z( \C_G(g))}=p$,
  and we get $g \in \Z( \C_G(g))\leq \Z(G)$, contradiction.
\end{example}
\begin{example}
  With  help of the Small Groups library of GAP~\cite{gap4},
  we can see
  that there are groups containing no anticentral element of order
  $p^5$ if $p$ is an odd prime, and of order $2^6$.
  In fact there are groups $G$ with these orders such that
  $\abs{\Z(G)}\geq \abs{G:G'}$.
  With GAP, we see also that  all  groups of order $2^5$
  contain anticentral elements.
\end{example}
\begin{example}
  Let $G$ be the group of all upper triangular $n\times n$
  matrices with entries from some finite field of order $q$
  and $1$'s on the main diagonal.
  Then $\abs{G}= q^{\binom{n}{2}}$, $\abs{G:G'} = q^{n-1}$.
  If $a\in G$ has minimal polynomial $(x-1)^n$, then the
  centralizer of $a$ in the matrix ring is the ring generated by
  $a$ and the field. Every element of the centralizer can thus be
  written uniquely as a polynomial in $a$ of degree at most $n-1$
  over the field in question. The centralizer in $G$ contains
  the polynomials with constant term $1$, so
  $\abs{\C_G(a)}= q^{n-1}$.
  Thus such an elements is anticentral.
  This example shows that there exist groups with anticentral
  elements  of arbitrarily large nilpotency index.
\end{example}
\begin{example}
  Let $G$ be an extraspecial group of order $p^5$, and
  $a\in G \setminus \Z(G)$. Then $a$ is anticentral in $G$,
  but not in $\C_G(a)$, since $\C_G(a)$ is nonabelian.
  Thus an anticentral element of a group may not be anticentral
  in a subgroup containing it.
\end{example}
\begin{remark}
  If a group contains anticentral elements, then the restriction
  of every nonlinear character to the commutator subgroup is
  reducible. Every nonlinear character is imprimitive.
\end{remark}
\begin{proof}
  If $\chi \in \irr G$
  with  $\chi_{G'} \in \irr G'$, then
  $\sum_{g\in G'} \abs{\chi(ag)}^2 =
  \abs{G'}$
  for every $a\in G$~\cite[Lemma~8.14]{isa76}.
   But if $a$ is anticentral in $G$,
  then every nonlinear character
  vanishes on $aG'$.

  Now remember that every group is an $M$-group with
  respect to $G'$, that means for every $\chi \in \irr G$ there is
  a subgroup $H$ with
  $G' \leq H \leq G$ and $\psi\in \irr H$ such
  that
  $\chi = \psi^G$ and
  $\psi_{G'}\in \irr G'$~\cite[Theorem~6.22]{isa76}.
  From the first statement, we clearly have $H < G$ for nonlinear
  $\chi$.
\end{proof}
\begin{example}
  A solvable group with anticentral elements need not be monomial:
  Let $G$ be the central product of $\mathbf{SL}(2,3)$ and
  a nonabelian group of  order $8$ with identified centers:
  \[ G = (\mathbf{SL}(2,3) \times E) / Z \text{ where }
         Z = \{ (1,1), (-1, z)\},  1\neq z\in\Z(E).\]
  Then the elements $(x,y)Z$ where $x \in \mathbf{SL}(2,3)$
  has order $3$ and
  $y \in E \setminus \Z(E)$ are anticentral. It is easy to compute
  that the centralizer of such an element is
  $\C(x) \times \C(y)/Z$ and has order $12$.
  This is also the index of the derived subgroup,
  since $G' \cong Q_8$
  and
  $\abs{G}=96$.
  The group is not  monomial, more precisely,
  the irreducible characters of degree $4$ are not monomial.
\end{example}
\begin{example}
 A $p$-group of maximal class contains anticentral elements,
 namely the elements not in
 $\C_G( \mathbf{K}_i(G)/\mathbf{K}_{i+2}(G)) $ for $i\geq2$ where
 $\mathbf{K}_i(G)$ are the terms of the descending central series
 of $G$~\cite[Hilfssatz~III.14.3, Satz~III.14.23]{hupp67}.
  Since here the anticentral
 elements have order $\leq p^2$, the class of nilpotency is not
 bound in terms of the order of the anticentral element.
\end{example}

\section{Supplements}
\begin{prop}\label{gset}\label{suppl}
  Let $a\in G$ be anticentral and $\Omega$ a $G$-set on which $G'$
  acts transitively. Then $a$ fixes exactly one element of
  $\Omega$.
  If $H\leq G$ is a supplement for $G'$ (that is $G=HG'$), then
  $a$ is contained in a unique conjugate of $H$.
  If $a \in H $ and $HG'=G$, then
  \begin{enumthm}
  \item \label{comm}$ [a, H] = H' = H \cap G'$. In particular,
        $a$ is anticentral in $H$.
  \item \label{normg} $\{ x \in G \mid a \in H^x \} = H$.
  \item \label{cent}$\C_G(a) \leq H$.
  \item \label{norm} $\N_G(H) = H$.
  \end{enumthm}
\end{prop}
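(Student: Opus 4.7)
The plan is to first establish the general $G$-set fixed-point claim by a double-counting argument, then specialize to $\Omega = G/H$ to deduce all the statements about supplements.

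For the first part, the key input is the identity $a^G = aG'$ from the previous proposition. I double count pairs $(g', \omega) \in G' \times \Omega$ with $ag'\omega = \omega$. For fixed $\omega$, the set $\{g' \in G' : g'\omega = a^{-1}\omega\}$ is a coset of the $G'$-stabilizer $G'_\omega$ (nonempty because $G'$ is transitive on $\Omega$), so has cardinality $\abs{G'}/\abs{\Omega}$; summing over $\omega$ gives total $\abs{G'}$. On the other hand, each $ag'$ is conjugate to $a$ and therefore has the same number $f$ of fixed points on $\Omega$, giving total $\abs{G'} \cdot f$. Hence $f = 1$.

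I apply this to $\Omega = G/H$ with $G$ acting by left multiplication; $G'$ is transitive here because $HG' = G$. A coset $gH$ is fixed by $a$ iff $a \in gHg^{-1}$, and the map $gH \mapsto gHg^{-1}$ onto the conjugates of $H$ has all fibers of size $\abs{\N_G(H):H}$. Hence the number of fixed cosets equals $k \cdot \abs{\N_G(H):H}$, where $k$ is the number of conjugates of $H$ containing $a$. Since this product equals $1$, both factors equal $1$: uniqueness of the conjugate of $H$ containing $a$, and (when $a \in H$) assertion (iv).

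Now assume $a \in H$ with $HG' = G$. The unique fixed coset must be $H$ itself; since $\C_G(a)$ permutes the fixed points of $a$, it fixes $H$, so $\C_G(a) \leq H$, which is (iii). For (ii), uniqueness gives $a \in H^x$ iff $H^x = H$ iff $x \in \N_G(H) = H$. For (i), one has $[a,H] \subseteq H' \subseteq H \cap G'$; using (iii),
\[ \abs{[a,H]} = \abs{H : \C_H(a)} = \abs{H : \C_G(a)} = \abs{H}/\abs{G:G'} = \abs{H \cap G'}, \]
the last equality coming from $H/(H \cap G') \cong HG'/G' = G/G'$. Equality of cardinalities forces $[a,H] = H' = H \cap G'$, and the claim that $a$ is anticentral in $H$ then reads $\abs{\C_H(a)} = \abs{H:H'}$, which has just been verified.

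The only slightly tricky step is the counting in paragraph three: the observation that the number of fixed cosets overcounts conjugates of $H$ containing $a$ by the factor $\abs{\N_G(H):H}$ is essential, but once in place it delivers uniqueness of the conjugate and $\N_G(H) = H$ together from the single identity $f = 1$.
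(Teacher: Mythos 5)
Your proof is correct, but it takes a genuinely different route from the paper's main argument --- one that is in fact close in spirit to the alternative proof sketched in the Remark following this proposition. The paper first establishes only \emph{existence} of a fixed point by a direct computation (transitivity gives $\omega = \omega a x$ with $x\in G'$; anticentrality gives $x=[a,g]$, so $\omega = \omega a^g$ and $\omega g^{-1}$ is fixed), then proves \textup{(i)} and \textup{(iii)} via the cardinality squeeze $\abs{H\cap G'}\leq \abs{[a,H]}$ (the same computation you use for \textup{(i)}), and only afterwards deduces uniqueness of the fixed point from $H\cap G'=[a,H]$ together with $\C_G(a)\leq H$; \textup{(ii)} and \textup{(iv)} come last. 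You instead obtain existence and uniqueness in one stroke by averaging the fixed-point count over the coset $aG'=a^G$, and then derive \textup{(iv)}, \textup{(iii)}, \textup{(ii)}, \textup{(i)} in that order from uniqueness. Your double count is exactly the combinatorial shadow of the paper's character-theoretic remark: summing the permutation character $\pi$ over $aG'$ isolates its linear constituents, of which only $1_G$ occurs (with multiplicity one) since $G'$ is transitive, whence $\pi(a)=1$; you replace the orthogonality relations by orbit--stabilizer plus the observation that every element of $aG'$ is conjugate to $a$, which is an elementary and self-contained packaging. The fiber computation on $G/H$ extracting both $k=1$ and $\abs{\N_G(H):H}=1$ from the single equation $f=1$ is a nice touch not made explicit in the paper, and all the individual steps check out: the fibers of $gH\mapsto gHg^{-1}$ do have constant size $\abs{\N_G(H):H}$, and the substitution $\C_H(a)=\C_G(a)$ in the computation of $\abs{[a,H]}$ is legitimate because \textup{(iii)} is already in hand at that point. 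There is no gap.
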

\begin{proof}
  Let $\omega \in \Omega$.
  By $G'$-transitivity,
  $\omega  = \omega a x$ for some $x\in G'$. Since $a$ is
  anticentral, $x=[a,g]$ for some $g\in G$. Then
  $\omega = \omega a [a,g] = \omega a^g$.
  Thus $\omega g^{-1}$ is fixed by $a$. This shows the existence
  of a fixed element. Uniqueness will be proved later.
  That $a$ is contained in a unique conjugate of a supplement $H$
  will follow  since the action on
  the right cosets of $H$ is $G'$-transitive.
  In (i)-(iv) we can
  assume that $H$ is an element stabilizer of a
  $G'$-transitive $G$-set $\Omega$.
  So for the rest of the proof, choose $\omega\in \Omega$ with
  $\omega a = \omega $ and let $H= G_{\omega}$.
   We have
  \[ [a,H] \subseteq H' \leq H \cap G' \]
  and
  \[ \abs{H\cap G'} = \frac{\abs{H}}{\abs{H:H\cap G'}}
                   = \frac{\abs{H}}{\abs{G:G'}}
                   = \frac{\abs {H}}{\abs{\C_G(a)}}
                   \leq \abs{H : \C_H(a)}
                   = \abs{[a,H]}.\]
  Thus equality holds throughout.
  This proves~\ref{comm} and also~\ref{cent}.
  If $\omega_1$ is another element fixed by $a$, then
  $\omega = \omega_1 g$ for some $g\in G$ by transitivity.
  Then $\omega = \omega_1 g = \omega_1 a g = \omega g^{-1} ag
               = \omega a^{-1} g^{-1} a g = \omega [a,g]$.
  Thus $[a,g] \in H \cap G' = [a,H]$, so
  $[a,g]= [a,h]$  for some $h \in H$.
  Since then $gh^{-1} \in \C_G(a)\leq H$ by~\ref{cent}, we get
  $g\in H$ and thus $\omega_1 = \omega g^{-1}= \omega$. Therefore $a$
  fixes a unique element of $\Omega$. The remaining assertions
  \ref{normg} and \ref{norm} are consequences of this.
\end{proof}
\begin{remark}
  An alternative proof runs as follows:
  Let $\pi$ be the permutation character associated with the action
  of $G$ on $\Omega$.
  Since $G'$ acts transitively on $\Omega$, the only
  linear constituent of $\pi$ is $1_G$, the trivial character, with
  multiplicity $1$. As the nonlinear irreducible characters vanish
  on  $a$, we see $\pi(a)=1$, as was to be  shown.
  \ref{normg} is a consequence of this, from which
  \ref{cent} and \ref{norm} follow. \ref{comm} is also a
  consequence of \ref{normg}: if $[a,g]\in H$, then
  $g\in H$ by~\ref{normg} and so
  $H \cap G' = H \cap [a,G] \subseteq [a,H]$.
\end{remark}
\begin{cor}
  If $G$ contains anticentral elements, then every supplement of
  $G'$ in $G$ is abnormal.
\end{cor}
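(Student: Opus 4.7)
The plan is to exploit the standard characterisation: a subgroup $H \leq G$ is abnormal if and only if every subgroup $K$ with $H \leq K \leq G$ is self-normalising in $G$. Given this, the corollary reduces almost immediately to part \ref{norm} of Proposition~\ref{suppl} applied to overgroups of $H$.

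First I would observe that abnormality is preserved under conjugation, so I may replace $H$ by any convenient $G$-conjugate. Since $G$ contains an anticentral element $a$ and (by Proposition~\ref{suppl}) every anticentral element lies in some conjugate of the supplement $H$, I may assume $a \in H$.

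Next, let $K$ be an arbitrary subgroup with $H \leq K \leq G$. Then $KG' \supseteq HG' = G$, so $K$ also supplements $G'$, and $a \in K$ is still an anticentral element of $G$. Proposition~\ref{suppl}\,\ref{norm} applied to the pair $(K,a)$ yields $\N_G(K) = K$. Thus every overgroup of $H$ is self-normalising, and $H$ is abnormal.

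The only non-trivial ingredient is the equivalence between abnormality and the ``every overgroup is self-normalising'' condition, which is classical; if one prefers a self-contained argument, an alternative is to verify the defining property directly. For $g \in G$, set $K = \langle H, H^g \rangle$; then $K$ supplements $G'$ and contains both anticentral elements $a$ and $a^g$. Part~\ref{comm} of Proposition~\ref{suppl} gives $K' = K \cap G'$, and since $a^{-1}a^g \in G' \cap K = K'$ we get $a^g \in aK' = a^K$, so $a^g = a^k$ for some $k \in K$. Then $gk^{-1} \in \C_G(a) \leq H \leq K$ by~\ref{cent}, forcing $g \in K = \langle H, H^g\rangle$. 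I expect no serious obstacle beyond keeping track of the hypotheses of Proposition~\ref{suppl} when passing from $H$ to $K$.
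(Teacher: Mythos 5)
Your second, ``self-contained'' argument is correct and complete, and it is the one that actually proves the corollary. After conjugating so that $a\in H$ (legitimate: abnormality is invariant under conjugation, and $a$ lies in some conjugate of $H$ by Proposition~\ref{suppl}), you set $K=\erz{H,H^g}$, note that $KG'=G$ and $a,a^g\in K$, and use Proposition~\ref{suppl}~\ref{comm} and~\ref{cent} to get $[a,g]\in K\cap G'=[a,K]$, hence $a^g=a^k$ for some $k\in K$ and $g\in\C_G(a)k\subseteq K$. Every step checks out. This is close in spirit to the paper's proof but different in execution: the paper observes that $a$, being anticentral in $K$, lies in a unique $K$-conjugate of $H^g$ as well as in a unique $G$-conjugate of $H$, matches the two, and finishes with $\N_G(H)=H$; you replace that double uniqueness argument by a direct commutator computation. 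Both routes rest entirely on Proposition~\ref{suppl}, and yours is arguably the more streamlined of the two.

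Your primary argument, however, has a genuine gap. The equivalence you call classical is classical only in one direction: an abnormal subgroup does have the property that every overgroup is self-normalizing, but the converse (``weakly abnormal implies abnormal'') is not valid for arbitrary finite groups without further argument --- it is a theorem for \emph{soluble} groups, and at this point in the paper $G$ is not yet known to be soluble (that is Theorem~\ref{acsolvable}, proved only in the last section). What Proposition~\ref{suppl}~\ref{norm} hands you is exactly that every overgroup of $H$ supplements $G'$ and is therefore self-normalizing; to get from there to $g\in\erz{H,H^g}$ you need an additional input, which is precisely what your second computation supplies. So the direct verification should be the proof, and the reduction to ``every overgroup is self-normalising'' should be dropped or at least not relied upon.
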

\begin{proof}
  Let $G=HG'$ and $x\in G$. We have to show that
  $x\in L= \erz{H, H^x}$. Let $a\in G$ be anticentral.
  Then  $a$ is contained in a unique conjugate of $H$ which we can
  assume to be $H$ itself. Now $a$ is anticentral in $L$ and thus
  fixes a unique $L$-conjugate of $H^x$, say $a\in H^{xy}$ where
  $y\in L$.
  From uniqueness in $G$, we get $H=H^{xy}$.
  Thus $xy \in \N_G(H)=H\leq L$ and so $x\in L$, as desired.
\end{proof}
\begin{lem}\label{frat}
 Let $P\in \syl_p(N)$, where $N\nteq G$.
 Then $G= \N_G(P)N'$.
 If $N$ is $\pi$-solvable then the same holds for
 $P$ a Hall $\pi$-subgroup.
\end{lem}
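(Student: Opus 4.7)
The plan is to reduce this to the classical Frattini argument, but applied to a cleverly chosen normal subgroup of $G$ that is already contained in $N'P$.

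First I would pass to the factor group $\bar G = G/N'$. Since $N/N'$ is abelian, its Sylow $p$-subgroup is unique, hence characteristic in $N/N'$. As $N/N' \nteq G/N'$, any characteristic subgroup of $N/N'$ is normal in $G/N'$. The image $\overline{PN'} = PN'/N'$ is precisely the Sylow $p$-subgroup of $N/N'$, so $PN' \nteq G$.

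Now $P \leq N$ is a Sylow $p$-subgroup of $N$ and hence of the intermediate group $PN'$. Since $PN'$ is normal in $G$, the usual Frattini argument gives
\[ G = \N_G(P)\cdot PN' = \N_G(P) N', \]
where the second equality holds because $P \leq \N_G(P)$.

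For the $\pi$-solvable variant, the only change is in the uniqueness step: in an abelian group the Hall $\pi$-subgroup is unique, so $PN'/N'$ is again characteristic in $N/N'$ and thus normal in $G/N'$. The Frattini argument then works for Hall $\pi$-subgroups, using the Hall--Chunikhin theorem that guarantees conjugacy of Hall $\pi$-subgroups in the $\pi$-solvable subgroup $PN' \leq N$. The conclusion $G = \N_G(P)N'$ follows exactly as before.

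The only mild obstacle is verifying that the Frattini argument is legitimately applicable after the reduction: one must check that $P$ is genuinely Sylow (or Hall) in the normal subgroup $PN'$, which is immediate because $P$ is Sylow (or Hall) in the larger group $N \geq PN'$. Everything else is bookkeeping.
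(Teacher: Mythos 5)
Your proof is correct and follows essentially the same route as the paper: both arguments observe that $PN'$ is normal in $G$ (the paper says it is characteristic in $N$, you justify this via the uniqueness of the Sylow/Hall subgroup of the abelian quotient $N/N'$) and then apply the Frattini argument to $PN'$, using conjugacy of Hall $\pi$-subgroups in the $\pi$-solvable case. Your write-up merely spells out details the paper leaves implicit.
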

\begin{proof}
 As $PN'$ is characteristic in $N$, it follows that
 $PN' \nteq G$.
 By the Frattini argument applied to $PN'$,
 we have $G= \N_G(P)PN'=\N_G(P)N'$.
\end{proof}
\begin{cor}\label{hall}
  Let $N\nteq G$ and $p$ a prime.
  Then any anticentral element  fixes a unique
  Sylow $p$-subgroup of $N$.
  If $N$ is $\pi$-solvable then the same holds for
  Hall $\pi$-subgroups.
\end{cor}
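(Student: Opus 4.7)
The plan is to deduce the corollary by applying Proposition~\ref{gset} to the $G$-set $\Omega = \syl_p(N)$, with $G$ acting by conjugation (this action is well defined because $N \nteq G$). Proposition~\ref{gset} guarantees a unique fixed point of any anticentral element as soon as $G'$ is transitive on $\Omega$, so the entire task reduces to verifying that transitivity.

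First I would note that by Sylow's theorem applied inside $N$, any two Sylow $p$-subgroups of $N$ are already $N$-conjugate, so $G$ (and even $N$) acts transitively on $\Omega$. Fix $P \in \syl_p(N)$; the stabilizer of $P$ under the $G$-action is $\N_G(P)$. Lemma~\ref{frat} immediately yields $G = \N_G(P) N'$. Since $N' \leq G'$, this upgrades to $G = \N_G(P) G'$, which is exactly the statement that $G'$ is transitive on $\Omega$. Proposition~\ref{gset} now applies and delivers the conclusion: the anticentral element fixes exactly one Sylow $p$-subgroup of $N$.

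For the $\pi$-solvable case I would invoke Hall's theorem for $\pi$-solvable groups, which ensures that Hall $\pi$-subgroups of $N$ exist and form a single $N$-conjugacy class; together with the second assertion of Lemma~\ref{frat} (giving $G = \N_G(P) N'$ for $P$ a Hall $\pi$-subgroup of $N$), the same three-line argument goes through verbatim.

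There is no real obstacle here: the corollary is essentially a direct combination of Proposition~\ref{suppl} with Lemma~\ref{frat}. The only point that deserves a word of care is the passage from $N'$-transitivity (which Lemma~\ref{frat} delivers directly) to $G'$-transitivity, and this is instant from $N' \leq G'$.
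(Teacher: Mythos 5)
Your proof is correct and follows essentially the same route as the paper: apply Lemma~\ref{frat} to get $G=\N_G(P)N'$, deduce that $G'$ acts transitively on $\syl_p(N)$ (resp.\ the Hall $\pi$-subgroups), and conclude by Proposition~\ref{gset}. The only cosmetic difference is that the paper phrases the transitivity via $N'$ first and then $G'$, whereas you pass directly through $N'\leq G'$; this is the same observation.
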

\begin{proof}
 Let $P\in \syl_p(N)$.
 By Lemma~\ref{frat}, we have $\N_G(P)N'=G$.
 Thus $N'$ and then $G'$ act transitively on the
 Sylow $p$-subgroups of $N$.
 By Proposition~\ref{gset},
 the result follows.
\end{proof}
We can  use this to extend a result of
Gross~\cite[Lemma~3]{rowley95} on groups admitting a fixed point
free automorphism:
\begin{prop}
 If $G$ has an anticentral element $a$, and if
 $N$ has a cyclic Sylow $p$-subgroup, where $N\nt G$ and $N\leq G'$,
 then $N$ has a normal $p$-complement.
\end{prop}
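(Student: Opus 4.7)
The plan is to apply Burnside's normal $p$-complement theorem to $N$: I pick a specific Sylow $p$-subgroup $P$ of $N$ and show that $\N_N(P) \leq \C_N(P)$. Since $P$ is cyclic, hence abelian, this puts $P$ in the centre of its normalizer in $N$, and Burnside's theorem produces the desired normal $p$-complement.

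By Corollary~\ref{hall}, let $P$ be the unique Sylow $p$-subgroup of $N$ with $a \in \N_G(P)$, and set $H = \N_G(P)$. Lemma~\ref{frat} gives $G = HN$, and since $N \leq G'$ this shows $G = HG'$. Thus $H$ is a supplement for $G'$ containing the anticentral element $a$, so Proposition~\ref{suppl} applies. What I need from it is the identity $H \cap G' = H'$.

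The crux is the elementary observation that $\aut(P)$ is abelian whenever $P$ is cyclic of prime-power order: it is cyclic when $p$ is odd and a direct product of two cyclic groups when $p = 2$. The conjugation action thus produces a homomorphism $H \to \aut(P)$ with abelian image, whence $H' \leq \C_H(P)$. Combining,
\[
\N_N(P) = H \cap N \leq H \cap G' = H' \leq \C_H(P),
\]
where the middle inclusion uses $N \leq G'$. Hence $\N_N(P) \leq \C_N(P)$, and Burnside's normal $p$-complement theorem completes the argument.

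I do not foresee a serious obstacle. The anticentral hypothesis is used only twice, via Corollary~\ref{hall} to single out the right $P$ and via Proposition~\ref{suppl} to replace $H \cap G'$ by $H'$; cyclicity of the Sylow subgroup enters only through the abelianness of $\aut(P)$. The only point where a small check is required is the standard structure of $\aut$ of a cyclic $2$-group, which is harmless.
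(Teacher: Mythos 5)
Your proof is correct and follows essentially the same route as the paper's: choose the $a$-invariant Sylow $p$-subgroup $P$ of $N$, use Proposition~\ref{suppl} for $H=\N_G(P)$ to get $H\cap G'=H'\leq\C_G(P)$ via the abelianness of $\aut(P)$, and conclude with Burnside. The only difference is that you spell out the verification $G=HG'$ (via Lemma~\ref{frat} and $N\leq G'$), which the paper leaves implicit.
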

\begin{proof}
 Let $P$ be a cyclic Sylow $p$-subgroup of $N$. We can assume that
 $a\in H = \N_G(P)$. By Proposition~\ref{suppl},
 we have $H' = [a,H]= H \cap G'$. But as $\aut P $ is abelian,
 we see $H' \leq \C_G(P)$. In particular,
 $\N_N(P)=H\cap N \leq H\cap G' \leq \C_G(P)$.
 By Burnside's transfer theorem,
 $N$ has a normal $p$-complement.
\end{proof}
 If $a$ is anticentral and fixes
$P\in \syl_p G$, then $a$ is anticentral in $\N_G(P)$
by Proposition~\ref{suppl}. We now take
a closer look at this situation:
\begin{prop}
 Suppose that $G$ has a normal Sylow $p$-subgroup $P$.
 Let $a= kx = xk$ where $x=a_{p}$ and $k=a_{p'}$, and let $K$ a
 complement to $P$ in $G$ containing $k$.
 Then $a$ is anticentral in $G$ if and only if the following
 conditions hold:
 \begin{enumerate}[label=\textup{(\arabic*)}]
 \item $k$ is anticentral in $K$,
 \item $x$ is anticentral in $\C_P(k)$,
 \item $\C_P(k) \cap P' = \C_P(k)'$,
 \item $\C_P(K)= \C_P(k)$.
 \end{enumerate}
\end{prop}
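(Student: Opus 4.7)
The plan is to exploit the semidirect product structure $G = P \rtimes K$ and decouple the four conditions by a $p$/$p'$-factorization argument. First, a short computation shows that $G' = K'[P,K]P'$ with $G' \cap P = [P,K]P'$, giving
\[|G:G'| = |P:[P,K]P'|\cdot|K:K'|,\]
a $p$-power times a $p'$-number. Next, $\C_G(k) = \C_P(k)\rtimes\C_K(k)$ (by separating $P$- and $K$-parts in the commuting condition), and since $\C_G(a)\subseteq\C_G(k)$, a direct count of when $pk'\in\C_G(k)$ centralizes $a=xk$ (namely ${}^{k'}x = p^{-1}xp$ for some $p\in\C_P(k)$) yields
\[|\C_G(a)| = |\C_{\C_P(k)}(x)|\cdot|S|, \qquad S = \{k'\in\C_K(k) : {}^{k'}x\sim_{\C_P(k)}x\}\leq\C_K(k),\]
again a $p$-power times a $p'$-number.

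Because both sides of $|\C_G(a)| = |G:G'|$ factor uniquely into $p$- and $p'$-parts, this single equality is equivalent to the conjunction of $|\C_{\C_P(k)}(x)| = |P:[P,K]P'|$ and $|S| = |K:K'|$. Coprime action of $\erz{k}$ on $P$ (giving $P = \C_P(k)[P,k]$ and a surjection $\C_P(k)\twoheadrightarrow P/[P,K]P'$) furnishes the chain
\[|\C_{\C_P(k)}(x)| \geq |\C_P(k):\C_P(k)'| \geq |\C_P(k):\C_P(k)\cap P'| \geq |P:[P,K]P'|,\]
with successive equalities iff (2), (3), and $\C_{P/P'}(k) = \C_{P/P'}(K)$ (the $P/P'$-version of (4)); likewise $|\C_K(k)|\geq|K:K'|$, equality iff (1).

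For sufficiency, assuming (1)--(4): condition (4) makes $K$ centralize $\C_P(k)$, so ${}^{k'}x = x$ for all $k'\in K$ and hence $S = \C_K(k)$ trivially; together with (1)--(3), both chains collapse to equalities and $|\C_G(a)| = |G:G'|$. For necessity, (1) follows at once by passing to $G/P\cong K$; the $p$-equation and the chain then yield (2), (3), and the $P/P'$-version of (4), while the $p'$-equation combined with (1) forces $S = \C_K(k)$.

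The remaining—and main nonroutine—step is promoting the $P/P'$-version of (4) to the full condition $\C_P(K)=\C_P(k)$, for which I would use a Frattini argument. The $P/P'$-version of (4) gives $\C_P(k)\subseteq\C_P(K)\cdot P'$, so $\C_P(k) = \C_P(K)\cdot(\C_P(k)\cap P')$; combined with (3) this becomes $\C_P(k) = \C_P(K)\cdot\C_P(k)'$. Since $\C_P(k)'\subseteq\Phi(\C_P(k))$ in the $p$-group $\C_P(k)$, Frattini's argument forces $\C_P(K) = \C_P(k)$, which is (4). The rest of the proof is direct computation once the $p$/$p'$-splitting has been identified.
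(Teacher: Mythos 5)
Your proof is correct and follows essentially the same route as the paper: the factorization $\abs{G:G'} = \abs{P:[P,K]P'}\,\abs{K:K'}$, the inequality chain from $\abs{\C_{\C_P(k)}(x)}$ down to $\abs{P:[P,K]P'}$ whose successive equality cases are exactly (2), (3) and the mod-$P'$ version of (4), and the non-generator argument upgrading $\C_P(k)=\C_P(K)\C_P(k)'$ to the full condition (4). The only cosmetic difference is your unconditional formula $\abs{\C_G(a)} = \abs{\C_{\C_P(k)}(x)}\cdot\abs{S}$, whereas the paper reads off the $p$-part as $\abs{\C_P(a)}=\abs{\C_G(a)}_p$ in the necessity direction and computes $\C_G(a)=\C_K(k)\C_{\C_P(k)}(x)$ directly under hypothesis (4) in the sufficiency direction.
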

\begin{proof}
  If $a$ is anticentral in $G$, then $aP= kP$ is anticentral in
  $G/P \cong K$.
  We have
  \[ \abs{G:G'} = \abs{G:K'[P,K]P'}
                = \abs{K:K'}\abs{P:[P,K]P'}.\]
  Since $a$ is anticentral,
  \[ \abs{P:[P,K]P'} = \abs{G:G'}_{p}
      = \abs{\C_G(a)}_{p} = \abs{\C_P(a)}. \]
  As we have coprime action of $K$ on the $p$-group $P$,
  it follows that
  $P= [P,K]\C_P(K)$ and $P/P' = [P,K]P'/P' \times \C_{P/P'}(K)$
  \cite[Theorems~5.3.5 and 5.2.3]{gor68}.
  Let $U= \C_P(k)$. Then
  \begin{align*}
    \abs{\C_U(x)} &= \abs{\C_P(a)} \\
       & = \abs{P:[P,K]P'}  = \abs{\C_{P/P'}(K)}\\
       &= \abs{\C_P(K)P':P'} &&\text{\cite[Satz~I.18.6]{hupp67}}      \\
       &\leq \abs{\C_P(k)P':P'} \tag{*}\\
       &= \abs{UP':P'} = \abs{U:U\cap P'} \\
       &\leq \abs{U:U'} \leq \abs{\C_U(x)}. \tag{**}
  \end{align*}
  Thus equality holds throughout, so in particular
  $x$ is anticentral in $U=\C_P(k)$,
  since $\abs{\C_U(x)}= \abs{U:U'}$.
  Moreover, $U \cap P' = U'$, so condition~(3) holds.
  Finally, we have
  $\C_P(K)P' = \C_P(k)P'$ and thus
  $ U = \C_P(k) \leq \C_P(K)P'$.
  It follows
  \[  U = \C_P(K)P' \cap U = \C_P(K)(P'\cap U)
         = \C_P(K)U'\]
  and we conclude $U=\C_P(K)$.

  Conversely, assume that the four conditions in the statement of
  the proposition hold. Then $x\in \C_P(k) = \C_P(K)$ and thus
  $\C_G(x) = K\C_P(x)$.
  It follows that
  \begin{align*}
    \C_G(a) &= \C_G(k) \cap \C_G(x) = \C_K(k)\C_P(k) \cap K\C_P(x)
    \\
             &= \C_K(k)\big(\C_P(k)\cap \C_P(x)\big)
                = \C_K(k)\C_U(x).
  \end{align*}
  Since $k$ is anticentral in $K$, we have
  $\abs{K:K'}=  \abs{\C_K(k)}$. The other assumptions ensure that
  the inequalities (*) and (**)  become
  equalities, so $  \abs{P:[P,K]P'}= \abs{\C_U(x)}$.
  Then
  \[ \abs{\C_G(a)} = \abs{K:K'}\abs{P:[P,K]P'} = \abs{G:G'}\]
  and the result follows.
\end{proof}
\begin{prop}
 Let $H \leq G$ with  $HG'=G$ and $a\in H$ anticentral in $G$.
 Let
 $P\in \syl_p(G)$ be the Sylow $p$-subgroup of $G$ with
 $P^a=P$, and $S\in \syl_p (H)$ with $S^a=S$.
 Then $P\cap H = S$.
\end{prop}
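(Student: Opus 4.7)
By Proposition~\ref{suppl}(i), $a$ is anticentral in $H$, and Corollary~\ref{hall} applied within $H$ (as the ambient group) identifies $S$ as the \emph{unique} Sylow $p$-subgroup of $H$ normalized by $a$. Set $D:=P\cap H$: this is a $p$-subgroup of $H$ normalized by $a$, since $a$ normalizes both $P$ and $H$. If I can establish $S\leq P$, then $S\leq D$, and combined with $|D|\leq |H|_p=|S|$ this forces $D=S$, as required. The whole problem thus reduces to proving $S\leq P$.

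To establish $S\leq P$, I build an ascending chain $S=T_0\lneq T_1\lneq\dots\lneq T_n\in\syl_p(G)$ of $a$-invariant $p$-subgroups of $G$; uniqueness of the $a$-invariant Sylow $p$-subgroup of $G$ (Corollary~\ref{hall} in $G$) will then force $T_n=P$, so $S=T_0\leq T_n=P$. Inductively, assume that $\N_G(T_i)$ is a supplement of $G'$ in $G$. Then Proposition~\ref{suppl}(i) makes $a$ anticentral in $\N_G(T_i)$, so Corollary~\ref{hall} applied within $\N_G(T_i)$ produces a unique $a$-invariant Sylow $p$-subgroup $T_{i+1}$ of $\N_G(T_i)$, which contains $T_i$ since $T_i\nteq\N_G(T_i)$ is a $p$-subgroup. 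The supplement property propagates along the tower via Lemma~\ref{frat} applied to $T_{i+1}\in\syl_p(\N_G(T_i))$ inside the ambient group $\N_G(T_i)$:
\[ \N_G(T_i)=\bigl(\N_G(T_i)\cap \N_G(T_{i+1})\bigr)\cdot \N_G(T_i)'\leq \N_G(T_{i+1})G', \]
whence $\N_G(T_{i+1})G'=G$. The base case $\N_G(S)G'=G$ follows because $H=\N_H(S)H'$ by Lemma~\ref{frat}, so $G=HG'\leq \N_G(S)H'\leq \N_G(S)G'$.

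The chain is strictly increasing so long as $T_i$ is not itself Sylow in $\N_G(T_i)$, and terminates at some $T_n$ which \emph{is} Sylow in its own normalizer $\N_G(T_n)$. The standard Sylow-theoretic fact that a $p$-subgroup which is Sylow in its normalizer is already Sylow in the ambient group then yields $T_n\in\syl_p(G)$. Being $a$-invariant, $T_n$ must coincide with the unique such Sylow, namely $P$, so $S\leq P$, completing the argument. The main technical point is the propagation of the supplement property along the normalizer tower; once the Frattini argument secures that, the proof is a clean iteration of the uniqueness results already in hand.
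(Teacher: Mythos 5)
Your proof is correct: every step is properly justified, and the two key mechanisms you use --- the Frattini argument (Lemma~\ref{frat}) to propagate the supplement property $\N_G(T)G'=G$ up the normalizer tower, and the uniqueness of $a$-invariant Sylow subgroups in supplements (Proposition~\ref{suppl} together with Corollary~\ref{hall}) --- are exactly the engine of the paper's proof. The organization differs, though. The paper runs an induction on $\abs{P}/\abs{S}$ applied to the full statement: it passes from $(H,S)$ to $(M,Q)$ with $M=\N_G(S)$ and $Q$ the unique $a$-invariant Sylow $p$-subgroup of $M$, invokes the inductive conclusion $P\cap M=Q$, and then extracts $S\leq Q\cap \N_H(S)=P\cap \N_H(S)\leq P\cap H$. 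You instead first reduce the whole problem to the containment $S\leq P$ and prove that by iterating the same construction all the way up to a subgroup that is Sylow in its own normalizer, hence Sylow in $G$, hence equal to $P$ by uniqueness; the order count $\abs{P\cap H}\leq \abs{H}_p=\abs{S}$ then finishes. The sequence of subgroups you build is precisely the sequence the paper's induction traverses, so nothing new is needed; what your version buys is a cleaner statement of what is actually being iterated (only the containments and the supplement property, not the full equality $P\cap M=Q$ at each level), at the cost of explicitly citing the standard fact that a $p$-subgroup which is Sylow in its normalizer is Sylow in the whole group --- a fact the paper's induction quietly avoids by keeping $P$ in the inductive statement.
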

\begin{proof}
 By Proposition~\ref{suppl}, $a$ is anticentral in $H$ and,
 by Corollary~\ref{hall},
 $a$ fixes
  unique Sylow $p$-subgroups $P$ and $S$ of $G$ and $H$
 respectively.
 We work by induction on $\abs{P}/\abs{S}$.
 If $\abs{P}= \abs{S}$, the assertion is obvious.
 Let $\abs{S}< \abs{P}$. Set $N= \N_H(S)$.
 Then $NH'=H$ and $NG'= NH'G'=HG'=G$.
 Let $M =\N_G(S)$. Now $MG'=G$.
 Again, there is an unique   $Q\in \syl_p(M)$ such that $Q^a=Q$.
 As $S$ is a proper subgroup of some
 Sylow $p$-subgroup of $G$ and $S \nteq M$, we see $S < Q$,
 so by induction $ P\cap M= Q$.
 But then
 \[ S \leq Q\cap N = P \cap M \cap N = P \cap N \leq P\cap H,\]
 which is a $p$-subgroup of $H$. Thus $S = P \cap H$, as desired.
\end{proof}
It is well known that there are nilpotent supplements of $G'$.
Next we show that there is only one nilpotent supplement up to
conjugacy if the group contains anticentral elements.
\begin{notat}
 For $g\in G$, define inductively subsets $\C^i(g)$ by
 \[ \C^0 (g) = 1 \text{ and }
    \C^{i+1}(g) = \{ x \in G \mid [g,x] \in \C^i(g)\}.\]
 Let \[\C^{\infty} (g) = \bigcup_{i\geq 0}\C^i(g).\]
\end{notat}
\begin{thm}\label{minsupp}
 Let $a\in G$ anticentral and set $D=\C^{\infty}(a)$. Then
 \begin{enumthm}
   \item \label{Carter}$D$ is a nilpotent
         selfnormalizing subgroup of $G$,
   \item $G = D G'$ ($D$ is a supplement of $G'$ in $G$),
   \item \label{minimal} Every supplement of $G'$ containing $a$
          contains     $D$,
   \item \label{maximal} Every nilpotent subgroup containing $a$
           is contained in  $D$,
   \item $D$ is the only nilpotent supplement of $G'$
          containing $a$.
 \end{enumthm}
\end{thm}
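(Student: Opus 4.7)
The strategy is to trap $D = \C^{\infty}(a)$ between two bounds: $D$ lies inside every supplement of $G'$ containing $a$, and $D$ contains every nilpotent subgroup of $G$ containing $a$. Once a nilpotent supplement $M$ of $G'$ with $a \in M$ is produced, these inclusions force $D = M$, and every part of the theorem then falls out.

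The upper bound is proved by induction on $i$, with $\C^0(a) = 1 \subseteq H$ trivial. If $x \in \C^{i+1}(a)$, then $[a, x] \in \C^i(a) \subseteq H \cap G'$; Proposition~\ref{suppl} gives $H \cap G' = [a, H]$, so $[a, x] = [a, h]$ for some $h \in H$, whence $xh^{-1} \in \C_G(a) \leq H$ (again by Proposition~\ref{suppl}), and so $x \in H$. The lower bound uses the lower central series: if $N \ni a$ is nilpotent of class $c$ and $x \in N$, the iterates $x_0 = x$, $x_{i+1} = [a, x_i]$ satisfy $x_i \in \mathbf{K}_{i+1}(N)$ by an easy induction using $a \in N = \mathbf{K}_1(N)$, so $x_c = 1$ and hence $x \in \C^c(a) \subseteq D$. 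This already yields~\ref{maximal}.

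The main obstacle is producing a nilpotent subgroup $M \leq G$ with $a \in M$ and $MG' = G$. I would argue by induction on $\abs{G}$: if $G$ is itself nilpotent, set $M = G$; otherwise pick a minimal normal subgroup $N$ of $G$, observe that $aN$ is anticentral in $G/N$, and invoke induction to obtain a nilpotent supplement $\overline{M}$ of $(G/N)'$ in $G/N$ containing $aN$. Let $M_0 \leq G$ be its preimage, so $M_0/N$ is nilpotent and $M_0 G' = G$. The delicate step is to descend from $M_0$ to an honest nilpotent supplement of $G'$ containing $a$: when $N$ lies in the Frattini subgroup of $M_0$, Wielandt's criterion makes $M_0$ itself nilpotent; otherwise one complements the abelian $N$ in $M_0$ and uses Corollary~\ref{hall} together with the normalizer and commutator structure of Proposition~\ref{suppl} to arrange that a suitable conjugate of the complement contains $a$.

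Once $M$ is constructed, the two bounds give $D \subseteq M$ and $M \subseteq D$, so $D = M$. Hence $D$ is a nilpotent supplement of $G'$ containing $a$, and $\N_G(D) = D$ by Proposition~\ref{suppl}, yielding~\ref{Carter} and~(ii). Items~\ref{minimal} and~\ref{maximal} are the two bounds proved above, and~(v) is immediate: any nilpotent supplement $D^{\ast}$ of $G'$ with $a \in D^{\ast}$ satisfies $D \subseteq D^{\ast}$ by~\ref{minimal} and $D^{\ast} \subseteq D$ by~\ref{maximal}, forcing $D = D^{\ast}$.
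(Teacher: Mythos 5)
Your overall architecture --- trap $D$ between the two bounds \ref{minimal} and \ref{maximal}, then exhibit one nilpotent supplement of $G'$ containing $a$ and conclude $D$ equals it --- is exactly the paper's, and your proofs of the two bounds are correct. (For \ref{minimal} you use $H\cap G'=[a,H]$ and $\C_G(a)\leq H$ where the paper invokes Proposition~\ref{suppl}\ref{normg} directly; for \ref{maximal} you descend the lower central series where the paper ascends the upper one; these are cosmetic differences. The derivation of (i), (ii) and (v) from the bounds plus existence is also fine.)

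The genuine gap is in your construction of the nilpotent supplement. You induct on $\abs{G}$ via a minimal normal subgroup $N$ and, in the case $N\not\leq\Phi(M_0)$, you propose to ``complement the abelian $N$ in $M_0$''. But $N$ need not be abelian: at this point $G$ is not known to be solvable --- Theorem~\ref{acsolvable} comes \emph{later} and its proof uses Theorem~\ref{minsupp}, so assuming solvability here would be circular --- and a minimal normal subgroup of an arbitrary finite group can be a direct product of nonabelian simple groups, in which case the complementation step fails. The paper sidesteps this entirely by citing the fact (Huppert, Satz~III.3.10) that \emph{every} finite group has a nilpotent supplement $H$ of $G'$; the standard proof is short and works without solvability: take $H$ minimal with $HG'=G$, note $H\cap G'\leq\Phi(H)$ by minimality, hence $H'\leq H\cap G'\leq\Phi(H)$ and $H$ is nilpotent by Wielandt's criterion. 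Your final step (``arrange that a suitable conjugate of the complement contains $a$'') should likewise be replaced by a one-line appeal to Proposition~\ref{gset}: once \emph{some} nilpotent supplement exists, $a$ lies in a unique conjugate of it, and that conjugate is again a nilpotent supplement. With that repair your argument coincides with the paper's.
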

\begin{proof}
  We begin with~\ref{minimal}:
  Let $H$ be some subgroup containing $a$ with $HG'=G$.
   By induction, assume
  $\C^i(a) \subseteq H$. If $x\in \C^{i+1}(a)$, then
  $[a,x]\in \C^i(a)\subseteq H$ and so $a^x \in H$.
  By Proposition~\ref{suppl}~\ref{normg}, $x\in H$.
  This establishes~\ref{minimal}.

  To show~\ref{maximal}, let $a\in H \leq $G.
  Then the terms of the ascending central series of $H$ are
  contained in the $\C^i(a)$'s and thus $H\subseteq D$,
  if $H$ is nilpotent. 
  \ref{maximal} is proved.
  However, it is well known  (and easy to prove) that there is a
  nilpotent supplement $H$ of $G'$~\cite[Satz III.3.10]{hupp67},
  and  by
  Proposition~\ref{gset}, we can choose $H$ with $a\in H$.
  By \ref{minimal} and \ref{maximal}, $H = D$.
  By Proposition~\ref{suppl}, every supplement is selfnormalizing.
  Now everything follows.
\end{proof}
Recall that  selfnormalizing nilpotent subgroups
are called  Carter subgroups,
so $D$ is a Carter subgroup of $G$.
 In solvable groups,
Carter subgroups always exist and are
conjugate~\cite[Satz~VI.12.2]{hupp67}.
\begin{cor}\label{sylownorm}
 Let $a$ anticentral in $G$ and $N\nteq G$. For every prime $p$,
 $\C^{\infty}(a)$ normalizes a Sylow $p$-subgroup of $N$.
 We have
 \[ \C^{\infty}(a) = \bigcap\{ \N_G(P)  \mid
                            P \in \syl G \text{ and } P^a=P\}.\]
\end{cor}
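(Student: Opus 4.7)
The plan has two parts, one for each claim in the corollary.

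For the first assertion, I would fix a prime $p$ and a normal subgroup $N\nteq G$, and let $P$ be the unique $a$-invariant Sylow $p$-subgroup of $N$ guaranteed by Corollary~\ref{hall}. Setting $H=\N_G(P)$, Lemma~\ref{frat} gives $HN'=G$, and since $N'\leq G'$ this yields $HG'=G$. Thus $H$ is a supplement of $G'$ in $G$ containing $a$, so Theorem~\ref{minsupp}\ref{minimal} forces $\C^{\infty}(a)\subseteq H=\N_G(P)$, which is exactly the first claim.

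For the equality, applying the above with $N=G$ already shows that $D:=\C^{\infty}(a)$ is contained in $L:=\bigcap\{\N_G(P)\mid P\in\syl G,\ P^a=P\}$. It remains to prove $L\subseteq D$. The key observation, by Theorem~\ref{minsupp}\ref{maximal}, is that every nilpotent subgroup of $G$ containing $a$ is contained in $D$. Since $a$ normalizes each of the $a$-fixed Sylow subgroups $P$, we have $a\in L$, so it suffices to prove that $L$ is nilpotent.

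To verify nilpotency of $L$, I would show that for each prime $p$ the intersection $L\cap P_p$ (where $P_p$ denotes the unique $a$-fixed Sylow $p$-subgroup of $G$) is a normal Sylow $p$-subgroup of $L$. Normality is immediate because $L\leq \N_G(P_p)$. For the Sylow property, let $g\in L$ be any $p$-element: then $g$ normalizes $P_p$, so $\langle g,P_p\rangle$ is a $p$-group, and maximality of $P_p$ forces $g\in P_p$. Hence every $p$-element of $L$ lies in $L\cap P_p$, making it a normal Sylow $p$-subgroup. Having a normal Sylow subgroup for every prime, $L$ is the direct product of its Sylow subgroups and therefore nilpotent, which combined with Theorem~\ref{minsupp}\ref{maximal} yields $L\subseteq D$.

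I do not expect any serious obstacle: the first claim is a direct combination of Lemma~\ref{frat} with the minimality property of $D$, and the second reduces to the elementary fact that a subgroup normalizing a chosen Sylow $p$-subgroup for every prime is nilpotent, an argument that fits naturally into the framework already established.
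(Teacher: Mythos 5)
Your proof is correct and follows essentially the same route as the paper: Corollary~\ref{hall} plus Lemma~\ref{frat} to get $\N_G(P)G'=G$ and then Theorem~\ref{minsupp}\ref{minimal} for the first claim, and nilpotency of the intersection of normalizers plus Theorem~\ref{minsupp}\ref{maximal} for the reverse inclusion. The only difference is that you spell out the (standard) verification that the intersection $L$ is nilpotent, which the paper simply asserts.
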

\begin{proof}
 By Corollary~\ref{hall}, the anticentral element
 $a$ normalizes some
 Sylow $p$-subgroup $P$ of $N$; so $a\in \N_G(P)$.
 By Lemma~\ref{frat}, we have that $G=\N_G(P)N'$,
 and thus surely $G=\N_G(P)G'$.
 By Theorem~\ref{minsupp}\ref{minimal}, it follows that
 $\C^{\infty}(a) \leq \N_G(P)$. This proves the first statement.
 The argument shows that
 $\C^{\infty}(a)$ normalizes the unique Sylow
 $p$-subgroup of $N$ that is normalized by $a$.
 Applying this to the Sylow subgroups of $G$
 normalized by $a$ , we see that
 the intersection of their
 normalizers contains $\C^{\infty}(a)$.
 On the other hand, this intersection is nilpotent and contains
 $a$, and thus by
 Theorem~\ref{minsupp}\ref{maximal}
 is contained in $\C^{\infty}(a)$. The equation of the corollary
 follows.
\end{proof}
Before stating the next corollary, we adopt the following
terminology \cite{doerk92}:
A \emph{Hall system} of a group $G$ is a set of Hall subgroups of
 $G$ containing exactly on Hall $\pi$-subgroup for every
 $\pi \subseteq \{ p \mid p \text{ prime }, p \mid \abs{G}\}$
 and such that for two subgroups $H$ and $K$ in this set we have
 $HK = KH$. A \emph{complement basis} is a set consisting of
 $p$-complements of $G$, exactly one for each prime dividing
 $\abs{G}$.
 A \emph{Sylow basis} is a set consisting of pairwise permutable
 Sylow $p$-subgroups of $G$, exactly one for each prime $p$
 dividing $\abs{G}$. If $G$ is a solvable group, then
 Hall systems, complement bases and Sylow systems exist. Every
 Hall system contains exactly one complement basis and exactly one
 Sylow basis.
 If $K= \{ H_{p'} \mid p \mid \abs{G}\} $ is a complement basis,
 then
 \[ \{ G_{\pi} := \bigcap_{p \notin \pi} H_{p'} \mid
            \pi \subseteq \{p \mid p\mid \abs{G}\}\}\]
 is the unique Hall system containing $K$.
 If $B= \{ S_p \mid p \mid \abs{G}\}$ is a Sylow basis, then
 \[\{ G_{\pi}:= \prod_{p \in \pi} S_p
            \mid \pi \subseteq \{p \mid p \mid \abs{G}\} \} \]
 is the unique Hall system containing
 $B$~\cite[p.~220-222]{doerk92},\cite[Satz~VI.2.2]{hupp67}.
\begin{cor}\label{hallsys}
 Let $N $ be a solvable normal subgroup of $G$ and
 $a\in G$ anticentral.
 Then the set
 \[ \{H\leq N \mid H \text{ a Hall subgroup of } N
          \text{ and } H^a=H \}\]
 is a Hall system of $N$.
 If $G=N$ is solvable, the system normalizer of this Hall
 system  is $\C^{\infty}(a)$.
\end{cor}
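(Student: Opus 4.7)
The plan is to build the Hall system from the bottom up by combining Corollary~\ref{hall} with the characterization of Hall systems via complement bases that was quoted just before the corollary statement. Since $N$ is solvable, it is $\pi$-solvable for every set of primes $\pi$, so Corollary~\ref{hall} applied to the anticentral element $a$ and to the normal subgroup $N$ produces, for every such $\pi$, exactly one $a$-invariant Hall $\pi$-subgroup of $N$. In particular, letting $H_{p'}$ denote the unique $a$-invariant $p$-complement of $N$, the family $\{H_{p'} \mid p \mid \abs{N}\}$ is a complement basis of $N$.

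I would then invoke the fact cited from Doerk--Hawkes: this complement basis is contained in a unique Hall system of $N$, namely $\{G_\pi := \bigcap_{p\notin\pi} H_{p'}\}$. Each $G_\pi$ is $a$-invariant as an intersection of $a$-invariant subgroups, and it is a Hall $\pi$-subgroup of $N$. The uniqueness clause of Corollary~\ref{hall} then forces $G_\pi$ to coincide with \emph{the} $a$-invariant Hall $\pi$-subgroup of $N$, so the set described in the statement is exactly this Hall system; this proves the first assertion.

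For the second assertion, assume $G=N$ is solvable. The Sylow $p$-subgroups $S_p = G_{\{p\}}$ belonging to the Hall system just constructed are, by the first part, precisely the $a$-invariant Sylow subgroups of $G$. The system normalizer of a Hall system of a solvable group is, by definition, the intersection $\bigcap_p \N_G(S_p)$ of the normalizers of its Sylow members, and by Corollary~\ref{sylownorm} this equals $\C^{\infty}(a)$. The argument is largely a bookkeeping combination of Corollaries~\ref{hall} and~\ref{sylownorm} with the complement-basis description of Hall systems; the only delicate point I expect is to see that the $a$-invariant Hall subgroups of the various types actually \emph{permute pairwise}, and that is handled automatically by passing through a complement basis rather than trying to verify permutability directly.
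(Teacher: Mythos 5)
Your proposal is correct and follows essentially the same route as the paper: the paper's proof also obtains the unique $a$-invariant $p$-complements of $N$ from Corollary~\ref{hall}, passes to the Hall system they generate via intersections, and derives the statement about the system normalizer from Corollary~\ref{sylownorm}. Your write-up merely makes explicit the uniqueness argument identifying the intersections with the $a$-invariant Hall subgroups, which the paper leaves implicit.
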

\begin{proof}
 The second assertion follows from Corollary~\ref{sylownorm}
  and the first from Corollary~\ref{hall} by taking an
  $a$-invariant  Hall $p'$-group for every prime $p$ and
  then their  intersections.
\end{proof}
\section{Solvability}
We begin with a result characterizing  anticentral elements in
solvable groups in terms of chief factors. It was pointed out to
me by R.~Kn\"{o}rr.
\begin{prop}\label{charaz}
  Let $G$ be solvable. An element $a$ is anticentral in $G$
  if and only if the following conditions hold:
  \begin{enumerate}[label=\textup{(\arabic*)}]
  \item \label{fpf} $a$ operates fixed point freely on the
         noncentral         chief factors of $G$.
  \item If $N/K$ is a central chief factor and $N\leq G'$, then
        $\C_G(aK) < \C_G(aN)$.
  \end{enumerate}
\end{prop}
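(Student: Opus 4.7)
The plan is to prove each direction by induction on $|G|$, reducing via a minimal normal subgroup of $G$.

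\emph{Backward direction.} Assuming (1) and (2) and that $G'\ne 1$ (else $G$ is abelian and the conclusion is trivial), I pick a minimal normal subgroup $M\leq G'$. Both conditions transfer to $G/M$: noncentral chief factors of $G/M$ correspond to noncentral chief factors of $G$ above $M$; and since $M\leq G'$ we have $(G/M)'=G'/M$, so a chief factor $\overline N/\overline K$ of $G/M$ with $\overline N\leq (G/M)'$ corresponds to one of $G$ with $N\leq G'$. By induction $|\C_{G/M}(aM)|=|G:G'|$. If $M$ is noncentral, (1) gives $\C_M(a)=1$, so the reduction $\C_G(a)\to\C_{G/M}(aM)$ is injective, forcing $|\C_G(a)|\leq|G:G'|$; the reverse inequality is automatic, giving equality. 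If $M$ is central, (2) yields $\C_G(a)<\C_G(aM)$; the map $\C_G(aM)\to M$, $x\mapsto[a,x]$, is a homomorphism (using centrality of $M$) with kernel $\C_G(a)$ and nontrivial image, hence surjective (as $M$ has prime order), yielding $|\C_G(a)|=|\C_G(aM)|/|M|=|G:G'|$ via $\C_G(aM)/M=\C_{G/M}(aM)$.

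\emph{Forward direction.} Assume $a$ is anticentral. Condition (2) is immediate: after passing to $G/K$, for a central minimal normal $N\leq G'$, the inclusion $aN\subseteq aG'=a^G$ gives for each $n\in N$ an $x\in G$ with $[a,x]=n$, and for $n\ne 1$ this $x$ lies in $\C_G(aN)\setminus\C_G(a)$. For (1), take a noncentral chief factor $N/K$ and pass to $G/K$ to assume $K=1$, $N$ minimal normal noncentral, and hence $N\leq G'$. If $G$ has another minimal normal $M\ne N$, then $M\cap N=1$ and $MN/M\cong N$ is a noncentral minimal normal in the smaller group $G/M$; by induction $\C_{MN/M}(aM)=1$, which translates to $\C_N(a)=1$.

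\emph{Main obstacle.} The difficult remaining case is when $N$ is the unique minimal normal subgroup of $G$. Assume $\C_N(a)\ne 1$ and set $M=\C_N(a)$. Then $\langle a,M\rangle$ is abelian and hence nilpotent, so by Theorem~\ref{minsupp}\ref{maximal} it is contained in $D=\C^{\infty}(a)$; in particular $M\leq D\cap N\leq D\cap G'=D'$. Moreover, each $am$ with $m\in M$ lies in $aG'=a^G$, so $(am)^G\subseteq aG'$ of size $|G'|$ together with $|\C_G(am)|\geq|G:G'|$ forces $|\C_G(am)|=|G:G'|$; thus every element of the coset $aM$ is anticentral and $\C_G(am)=\C_G(a)\cap\C_G(m)=\C_G(a)$ for all $m\in M$, so $\C_G(a)\leq\C_G(M)$. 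The contradiction is then extracted by combining this with the irreducibility of $N$ as a $G$-module and the uniqueness of $N$ as a minimal normal subgroup: if $a\in\C_G(N)$ then $a^G=aG'\subseteq\C_G(N)$ forces $G'\leq\C_G(N)$, so that $G/\C_G(N)$ is abelian and, by Schur's lemma, cyclic acting on $N$ by scalars, which rules out a nontrivial fixed subspace for any non-identity element; ruling out $a\in\C_G(N)$ itself requires a count of $|\C_G(a)|$ in this configuration. Carrying out this last step cleanly is the most delicate part of the proof.
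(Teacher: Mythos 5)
Your backward direction is correct and is essentially the paper's argument: induct over a minimal normal subgroup, use the homomorphism $x\mapsto[a,x]$ on $\C_G(aM)$ in the central case, and a counting/injectivity argument in the noncentral case (the paper takes an arbitrary minimal normal subgroup and treats $N\cap G'=1$ separately, but this is a cosmetic difference). Your derivation of condition (2) in the forward direction from $G'=[a,G]$ is also exactly the paper's. The genuine gap is condition (1) in the forward direction, in the case where $N$ is the unique minimal normal subgroup: you assemble some partial information about $M=\C_N(a)$ but, as you concede, never reach a contradiction. Moreover some of the intermediate claims are themselves unjustified: $\C_G(am)=\C_G(a)\cap\C_G(m)$ only has the inclusion ``$\supseteq$'' for free (the orders of $a$ and $m$ need not be coprime, so anticentrality of $am$ does not split the centralizer), hence $\C_G(a)\leq\C_G(M)$ is not established; and your sketch for the endgame only treats the subcase $a\in\C_G(N)$, leaving the complementary subcase to an unspecified ``count of $\abs{\C_G(a)}$''. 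As written, the hardest configuration of the forward direction is simply not proved.

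The idea you are missing is that no induction and no case distinction on minimal normal subgroups is needed for condition (1). By Corollary~\ref{hallsys}, the Carter subgroup $D=\C^{\infty}(a)$ is the system normalizer of the Hall system consisting of the $a$-invariant Hall subgroups, and system normalizers of solvable groups avoid the noncentral chief factors \cite[Hauptsatz~VI.11.10]{hupp67}. Since $\C_G(a)=\C^{1}(a)\subseteq D$, after passing to $G/K$ (legitimate because $aK$ is anticentral there) one gets $\C_N(a)\leq D\cap N=1$ for every noncentral minimal normal subgroup $N$ --- one line, uniform in all cases. You already introduced $D$ and observed $\C_N(a)\leq D\cap N$ via Theorem~\ref{minsupp}, so you are one citation away from closing the gap; without that cover--avoidance input you would have to complete the delicate ad hoc analysis you started, which the paper deliberately bypasses.
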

\begin{proof}
  We begin with ``only if'': 
  Let $N/K$ be a chief factor of $G$.
  Since $aK$ is anticentral in $G/K$, we can assume $K=1$, so $N$
  is a minimal normal subgroup.
  The system normalizer $D= \C^{\infty}(a)$ avoids the noncentral
  chief factors \cite[Hauptsatz~VI.11.10]{hupp67} and contains
  $\C_G(a)$. Thus $a$ operates fixed point freely on $N$ if $N$ is
  noncentral. If $1<N \leq G' = [a,G]$ then obviously
  $\C_G(a) < \C_G(aN) =\{ x \in G \mid [a,x] \in N \}$.

  Now let us prove the other direction: Assume the conditions of
  the proposition and let $1< N \nteq G$ a minimal normal subgroup.
  By induction, $aN$ is anticentral in $G/N$.
  If $N\cap G'=1$, then
  $\C_G(aN) = \{x \in G \mid [a,x]\in N\} = \C_G(a)$ and
  $\C_{G/N}(aN)= \C_G(a)/N$.
  Thus
  \[\abs{G:G'} = \abs{(G/N) : (G/N)'}\abs{N}
    = \abs{\C_{G/N}(aN)}\abs{N}= \abs{\C_G(a)},\]
  so $a$ is anticentral.
  Otherwise we have $N\leq G'$. Then
  \[ \abs{G:G'}= \abs{G/N : (G/N)'} = \abs{\C_{G/N}(aN)}
              = \abs{C_G(aN)}/\abs{N}.\]
  We want to show that this equals $\abs{\C_G(a)}$.
  Since $\abs{\C_G(aN):\C_G(a)} =  \abs{[a,\C_G(aN)]}$
  and $[a,\C_G(aN)]\subseteq N$, we can finish the proof by showing
  $[a,\C_G(aN)]= N$.
  If $N \leq \Z(G)$, then the map $x \mapsto [a,x]$
  is a group homomorphism from $\C_G(aN)$ to $N$ and
  is nontrivial since by assumption
  $\C_G(a) < \C_G(aN)$, and so
  $1\neq [a, \C_G(aN)]\leq N$. Since $\abs{N}$ is a prime,
  we get $[a, \C_G(aN)]= N$ as desired.
  If $N\cap \Z(G)=1$, then $a$ operates fixed point freely on $N$
  and thus $N = [a,N] \subseteq [a, \C_G(aN)] $.
  This finishes the proof.
\end{proof}
Any group admitting a fixed point free automorphism is solvable
\cite{rowley95}
(the proof of this uses the classification
 of  finite simple groups).
So every group for which condition~\ref{fpf} above
holds is necessarily solvable.
We are now going to prove the converse, that is, every group
containing anticentral elements is solvable. In parts, the proof
is very similar to Rowley's proof on the solvability of groups
admitting a fixed point free automorphism \cite{rowley95}.

In the proof, we will need the first part of the following
proposition:
\begin{prop}\label{invcls}
  Let  $a \in G$ be anticentral. If $K$ is a $G$-class which at
  the same time is an $G'$-class, then $a$ fixes exactly one
  element of $K$. Let $D= \C^{\infty}(a)$. The map
  \[  \Z(D) \ni x \mapsto x^G  \]
  is a bijection from $\Z(D)$ to the set of $G'$-classes
  that are invariant under $G$.
\end{prop}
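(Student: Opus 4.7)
The plan is as follows. The first assertion is immediate from Proposition~\ref{suppl}: since $K$ is simultaneously a $G$-class and a $G'$-class, the conjugation action of $G$ on $K$ is $G'$-transitive, and the first part of Proposition~\ref{suppl} yields at once that $a$ fixes exactly one element of $K$.

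For the bijection claim I verify well-definedness, injectivity, and surjectivity in turn. Note first that $a \in \C^1(a) \subseteq D$, since $[a,a] = 1 \in \C^0(a)$; this fact is used in what follows. \emph{Well-definedness}: for $x \in \Z(D)$ one has $D \leq \C_G(x)$, and since $G = DG'$ by Theorem~\ref{minsupp}, $x^G = x^{DG'} = x^{G'}$, so $x^G$ is a single $G'$-class and is $G$-invariant. \emph{Injectivity}: if $x, y \in \Z(D)$ lie in the same $G$-class, then, as $a \in D$ centralizes each of them, both $x$ and $y$ are fixed points of $a$ in that class, and the first assertion forces $x = y$.

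\emph{Surjectivity} is the main content. Let $K$ be a $G$-invariant $G'$-class and let $x \in K$ be the unique $a$-fixed element given by the first part. The hypothesis $x^G = x^{G'}$ gives the equality of orbit sizes
\[ \abs{G:\C_G(x)} = \abs{G' : G'\cap \C_G(x)} = \abs{\C_G(x) G' : \C_G(x)}, \]
where the last equality is the second isomorphism theorem. This forces $G = \C_G(x) G'$, so $\C_G(x)$ is a supplement of $G'$ that contains $a$. By Theorem~\ref{minsupp}\ref{minimal}, $D \leq \C_G(x)$, i.e.\ $x \in \Z(D)$.

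The only nontrivial move is the surjectivity step, where one has to read the coincidence $x^G = x^{G'}$ as saying that $\C_G(x)$ supplements $G'$, and then invoke the minimality of $D$ among supplements of $G'$ containing $a$. Everything else is essentially bookkeeping from the already-established Proposition~\ref{suppl} and Theorem~\ref{minsupp}.
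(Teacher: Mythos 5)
Your proof is correct and follows essentially the same route as the paper: well-definedness via $x^G = x^{DG'} = x^{G'}$, injectivity via the uniqueness of the $a$-fixed element of a class, and surjectivity by reading $x^G = x^{G'}$ as $G = \C_G(x)G'$ and then invoking the minimality of $D$ among supplements of $G'$ containing $a$. The only elision is at the very end: $D \leq \C_G(x)$ gives $x \in \C_G(D)$, and to conclude $x \in \Z(D)$ you still need $\C_G(D) \leq D$, which holds because $\C_G(D) \leq \C_G(a) \leq D$ by Proposition~\ref{suppl}\ref{cent} --- a point the paper's proof makes explicit.
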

\begin{proof}
  The first statement is just another application of
  Proposition~\ref{gset}.
  Let $x\in \Z(D)$. Then $x^G=x^{DG'}=x^{G'}$.
  Conversely, if $K$ is a conjugacy class of $G$ whose elements
  are conjugate under $G'$, then $K\cap \C_G(a)$ contains exactly
  one element
  $x$, say. We claim $x\in \Z(D)$.
  Assuming this for the moment, we see
  that  the map sending $K$ to the unique element in $K \cap \C_G(a)$
  is  the inverse of the map sending
  $x\in \Z(D)$ to its conjugacy class $K= x^G$.
  So let us prove the claim: As $x^G = x^{G'}$, we have $G = \C_G(x)G'$.
  By Theorem~\ref{minsupp} and since $a\in \C_G(x)$, we have
  $D\subseteq \C_G(x)$ and thus
  $x \in C_G(D) =\Z(D)$, where the last equality follows from
  Proposition~\ref{suppl}.
\end{proof}
\begin{thm}\label{acsolvable}
  Every finite group containing anticentral elements is solvable.
\end{thm}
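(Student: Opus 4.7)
The plan is to argue by minimal counterexample and then invoke the classification of finite simple groups, much as in Rowley's proof that every group admitting a fixed-point-free automorphism is solvable. Let $G$ be a counterexample of minimal order: nonsolvable, containing an anticentral element $a$. Every proper quotient $G/N$ inherits an anticentral element $aN$, so is solvable by minimality. Consequently $G$ cannot possess two distinct minimal normal subgroups (otherwise $G$ embeds into the product of two solvable quotients), and its unique minimal normal subgroup $N$ cannot be abelian (otherwise $N$ and $G/N$ are both solvable, making $G$ solvable). Hence $N = S_1 \times \dots \times S_k$ for pairwise isomorphic nonabelian simple $S_i$; $N$ is perfect, so $N \leq G'$; and $\Z(N)=1$ combined with the uniqueness of $N$ forces $\C_G(N)=1$, placing $N \leq G \leq \aut N$. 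Finally $G \neq G'$, for otherwise $\abs{\C_G(a)} = \abs{G:G'} = 1$ would give $a=1$, yet then $\C_G(a)=G$, so $\abs{G}=1$.

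Next I would exploit the anticentrality of $a$ to pin down its action on $N$. By Corollary~\ref{hall}, $a$ normalizes a unique Sylow $p$-subgroup of $N$ for every prime $p$ dividing $\abs{N}$; the extension of Gross's result given earlier then forces $N$ to have no cyclic Sylow subgroup, since $N \leq G'$ has no nontrivial normal $p$-complement. Theorem~\ref{minsupp} furnishes the nilpotent self-normalizing Carter subgroup $D = \C^{\infty}(a)$ with $G = DG'$ and $\C_G(a) \leq D$, and Corollary~\ref{sylownorm} shows $D$ normalizes these $a$-invariant Sylow subgroups of $N$. Proposition~\ref{invcls} further links $\Z(D)$ with those $G$-classes that are fused by $G'$. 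Together these observations impose a very rigid structure on the embedding $N \leq G \leq \aut N$ and on the conjugation action of $a$ on $N$.

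The decisive step, and the main obstacle, is the concluding CFSG-based case analysis. Following Rowley, I would first pass to the case where $\langle a \rangle$ acts transitively on $\{S_1,\dots,S_k\}$ (by decomposing $N$ into $\langle a \rangle$-orbits and treating each independently), and then try to extract from the anticentrality of $a$ a fixed-point-free automorphism of some nonabelian characteristically simple section of $N$; Rowley's theorem (itself CFSG-based) would then immediately deliver a contradiction. If such an extraction is not available cleanly, the fallback is a family-by-family traversal of the nonabelian finite simple groups --- alternating, classical Lie-type, exceptional Lie-type, sporadic --- showing that the constraints just derived cannot coexist with any element $a \notin G'$ inducing the requisite automorphism of $N$. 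I expect this classification-dependent analysis to constitute the bulk of the work, mirroring the technical depth of Rowley's paper.
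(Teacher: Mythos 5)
There is a genuine gap: your argument stops exactly where the proof has to begin. The minimal-counterexample setup (unique nonabelian minimal normal subgroup $N\cong S_1\times\dots\times S_k$, $\C_G(N)=1$, $G\neq G'$) is fine, and is close in spirit to the paper's Step~1 (which instead reduces to $G=DK$ with $K\cong S^m$ minimal normal, using that $DK$ again contains anticentral elements). But everything after that is a statement of intent, not a proof. Your preferred route --- extracting a fixed-point-free automorphism of a section of $N$ and quoting Rowley --- does not go through: anticentrality of $a$ does not give $\C_K(a)=1$ for the minimal normal subgroup $K$ (already in an extraspecial $p$-group an anticentral element centralizes $G'$), and the paper's Proposition~\ref{charaz}, which does yield fixed-point-free action on noncentral chief factors, is proved only for solvable $G$ via system normalizers and so cannot be invoked here. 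Your declared fallback, a ``family-by-family traversal,'' is precisely the content you have not supplied.

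What the paper actually does, and what is missing from your proposal, is a concrete fixed-point-counting mechanism. From Proposition~\ref{invcls} it deduces (Step~2) that for every \emph{characteristic} conjugacy class $C$ of $S$ one has $\erz{x}\cap C=\{x\}$ for $x\in C$, because $C^m$ is a $G$-invariant $K$-class on which $G'$ acts transitively, so $a$ fixes exactly one element $x$, and $\erz{x}\subseteq\C_G(a)$. Exhibiting characteristic classes violating this disposes of the alternating and sporadic groups. For the groups of Lie type the paper uses the building: $G$ acts on the pairs $(C,\Sigma)$ of a chamber in an apartment, $K$ acts transitively on them, so by Proposition~\ref{gset} the anticentral element fixes exactly one such pair, yet the opposite chamber in the fixed apartment yields a second fixed pair --- a contradiction. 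None of these ideas appears in your proposal, so as written it does not establish the theorem.
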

\begin{proof}
  Let $G$ be a counterexample  with minimal order, and
  $D = \C^{\infty}(a )$ for some anticentral element $a$.
  \begin{step}
    There is $K \nt G$  such
    that $G=DK$ and $K \cong S^m$ for some nonabelian simple group
    $S$.
  \end{step}
  As $G$ is nonsolvable, it contains a nonabelian chief factor,
  say $K/L$. The anticentral elements of $G$ remain anticentral in every
  factor group of $G$, so by minimality $L=1$.
  Since $K$ is characteristically simple, $K\cong S^m$ for some
  simple group $S$.
  By Proposition~\ref{suppl}, $DK$ also contains anticentral
  elements. By minimality, $G=DK$.
  \begin{step}
    If $C$ is a
    characteristic conjugacy class of $S$
    (that is, $C$ is invariant under automorphisms of $S$),
    then $\erz{x} \cap C = \{x\}$ for all $x \in C$.
  \end{step}
  If $C$ is characteristic,  then
  $C^m = C \times \dots \times C$ is a conjugacy class
  of $K$ invariant under all automorphisms of $K$, so in
  particular it is $G$-invariant. This means that $C^m$ is a
  $G$-set on which $K$ acts transitively.
  Then $G'$ also acts transitively on $C^m$.
  By Proposition~\ref{invcls}, $C^m \cap \C_G(a)= \{x\}$
  for some $x$. Thus $C^m \cap \erz{x} = \{x\}$.
  Let $x = (x_1,\dots, x_m)$, where $x_i \in C$.
  Then we see
  $C \cap \erz{x_1} = \{x_1\}$, and the last equation holds for
  all elements of $C$.
  \begin{step}
    $S$ is not isomorphic to an alternating group.
  \end{step}
  Let $C$ be the conjugacy class of
  $x = (1,2,\dots , n-2)(n-1,n)^{n+1} \in A_n$, where $n>4$.
  Then $C$ contains all elements with the same cycle pattern as
  $x$ and $\C_{A_n}(x) = \erz{x}$.
  This conjugacy class is invariant under all
  automorphisms of $A_n$: If $n=6$, it contains all elements of
  order $4$, and if $n\neq 6$, the automorphism group of $A_n$ is
  known to be the $S_n$.
  (One can also show that for any other element
  $y\in A_n$ of order $n-2$,
  one has $\C_{A_n}(y) > \erz{y}$.)
  Since $\abs{C \cap \erz{x}} = \phi(n) \geq 2$, we see from
  Step~2 that $S$ is not an alternating group.
  \begin{step}
   $S$ is not isomorphic to a sporadic simple group.
  \end{step}
  For any sporadic simple group,  there is  $n>2$ such
  that all elements of order $n$ are conjugate~\cite{atlas}.
  Again, we can apply Step~2.
  \begin{step}
    $S$ is not a simple group of Lie type.
  \end{step}
  Assume  $S$ is a simple group of Lie type. If $(B_i,N_i)$,
  $i=1,\dots,m$, are BN-pairs of $S$, then
  $B= B_1 \times \dots \times B_m$ and
  $N= N_1 \times \dots \times N_m$ constitute a BN-pair of
  $K=S^m$. So we can assume that $K$ has a BN-pair $(B,N)$.
  Further, $B$ is
  the normalizer of a Sylow $p$-subgroup $Q$ of $K$.
  By Corollary~\ref{hall}, we can assume that $a$ normalizes $Q$,
  and by Theorem~\ref{minsupp} that $D$ normalizes $Q$.
  So $B = \N_G(Q) \cap K \nt \N_G(Q)= DB$ and $D\cap K \leq B$.
  Let $\Delta $ be the associated building \cite{brown89}.
  Then $\Delta$ as simplicial complex is
  obtained from the poset $\{ Px \mid x \in K, B\leq P \leq K\}$
  under reverse inclusion
  on which $K$ acts by right multiplication.
  This poset is isomorphic with the poset of parabolic subgroups
  $\{ P^x \mid x \in K , B \leq P \leq K\}$ on which $K$ acts by
  conjugation, since each
  $P$ is its own normalizer~\cite[p.111]{brown89}.
  Since $D$ normalizes $B$, also $G=DK$ acts on this set by
  conjugation.
  Thus $G$ acts on
  $\Delta$. Now we can complete the proof exactly as
  Rowley~\cite{rowley95} does: Let $\mathcal{A}$ be a system of
  apartments for $\Delta$.
  Since $\mathcal{A}$ is unique~\cite[p.~93, Theorem~2]{brown89},
  $G$ acts  upon $\mathcal{A}$.
  Since $K$ acts transitively upon the set of pairs $(C,\Sigma)$
  where $C$ is a chamber of $\Delta$, $\Sigma \in \mathcal{A}$
  and $C \in \Sigma$~\cite[p.~112, Theorem]{brown89},
  Proposition~\ref{gset} implies that the
  anticentral element fixes a unique pair $(C, \Sigma)$.
  As $\Sigma $ is a finite Coxeter complex, there is a unique
  chamber $-C$ in $\Sigma$ (the opposite chamber of $C$)
  whose distance from $C$ equals the
  diameter of $\Sigma$. But then the anticentral element also
  fixes $(-C, \Sigma)$, contradiction.

  By the classification of finite simple groups and Steps~3-5,
  the Theorem is proved.
\end{proof}
The reader may ask if we could have applied Step~2 to the simple
groups of Lie type as well. Most of the simple groups seem to have
conjugacy classes   forbidden by Step~2, but there are
counterexamples: In the groups $\mathbf{PSL}(2,3^{2k+1})$, the only
characteristic conjugacy classes are $\{1\}$ and the class of
involutions.

We also remark that the proof of Step~5 shows the following:
a finite group containing a BN-pair can not contain anticentral
elements since it acts transitively on the set of pairs
$(C,\Sigma)$ as above and no group element fixes exactly one pair.

The last result suggests the problem of classifying solvable groups
containing anticentral elements. A first step would be to
characterize $p$-groups containing anticentral elements, but even
this seems to be difficult. I even don't have an idea if there are
more groups with or without anticentral elements of order $p^n$
for $n$ tending to infinity.

\section*{Acknowledgements}
This work is part of my thesis\footnote{Note added in 2023:
In the end, my PhD-thesis was about something else,
and this work was not included in the thesis.}
done under the supervision of 
R.~Kn\"{o}rr. 
I would like to thank him for his many helpful remarks in our 
conversations about this paper, 
and especially for suggesting the subject of ``anticentral'' elements.

\printbibliography
\end{document}